\newtheorem{theorem}{Theorem}
\theoremstyle{plain}
\newtheorem{corollary}{Corollary}
\newtheorem{definition}{Definition}
\numberwithin{equation}{section}
\begin{document}
\title[Energy Dissipation of Eddy Viscosity Models]{Diagnostics for eddy viscosity models of turbulence including data-driven/neural network based parameterizations}
\author{William Layton and Michael Schneier}
\address{Dept. of Mathematics, Univ. of Pittsburgh, Pittsburgh, PA 15260, USA%
}
\email{wjl@pitt.edu, mhs64@pitt.edu}
\urladdr{www.math.pitt.edu/\symbol{126}wjl }
\thanks{The research herein was partially supported by NSF grant DMS
1817542. }
\thanks{}
\subjclass[2000]{Primary 65M12; Secondary 65J08 }
\keywords{NSE, eddy viscosity}
\dedicatory{ }

\begin{abstract}
Classical eddy viscosity models add a viscosity term with turbulent
viscosity coefficient whose specification varies from model to model.
Turbulent viscosity coefficient approximations of unknown accuracy are
typically constructed by solving associated systems of nonlinear evolution
equations or by data driven approaches such as deep neural networks. Often
eddy viscosity models over-diffuse, so additional fixes are added. This
process increases model complexity and decreases model comprehensibility,
leading to  the following two questions: \textit{Is an eddy viscosity model
needed?} \textit{Does the eddy viscosity model fail?}  This report derives \emph{a
posteriori} computable conditions that answer these two questions.
\end{abstract}

\maketitle

\section{Introduction}

In computational fluid dynamics, turbulence \cite{S01}, incomplete data,
quantification of uncertainty \cite{COPPM11}, a finite predictability
horizon \cite{TK93}, flow sensitivity \cite{MX06} and other issues lead to
the problem of computing \textit{averages} (denoted $u(x,t)$) of under
resolved (higher Reynolds number) solutions of the Navier-Stokes equations.
The most common approach \cite{W98}, among many, is to solve numerically
an eddy viscosity\footnote{%
The model arises after averaging (e.g., ensemble averages, time averages,
local space averaging) the NSE in which a non closed term arises. After
adjusting the pressure, eddy viscosity models replace that term by $-\nabla
\cdot \left( \nu _{turb}\nabla ^{s}u\right) $.} model 
\begin{equation}
u_{t}+u\cdot \nabla u-\nabla \cdot \left( \lbrack 2\nu +\nu _{turb}(\cdot
)]\nabla ^{s}u\right) +\nabla p=f(x)\text{ and }\nabla \cdot u=0
\label{eq:EVmodel}
\end{equation}%
subject to initial and boundary conditions. Here, $\nu $ is the kinematic
viscosity, $f(x)$ is the body force, $\nabla ^{s}u$ is the symmetric part of 
$\nabla u$, and $\nu _{turb}(\cdot )$\ is the eddy or turbulent viscosity. We let $U$ and $L$ denote a characteristic velocity and length scales respectively (defined precisely in
Section 2). The usual Reynolds number is then $\mathcal{R}e=LU/\nu $. This
holds in a $3d$, bounded, regular flow domain $\Omega $ subject to no-slip
boundary conditions ($u=0$ on $\partial \Omega $) and initial condition $%
u(x,0)=u_{0}(x)$. We assume $f(x)$ is smooth, $\nabla \cdot f=0$ in $\Omega $%
, and $f(x)=0$ on $\partial \Omega $.

Herein, $u(x,t)$ denotes the (sought) average velocity and, as usual, the
fluctuation about it is\textbf{\ }$u^{\prime }$. Thus, the induced turbulent
kinetic energy (TKE) density is $k^{\prime }(x,t):=\frac{1}{2}$ $|u^{\prime
}|^{2}(x,t)$. The Kolmogorov-Prandtl relation for $\mathbf{\nu }_{turb}$ is 
\begin{equation}
\mathbf{\nu }_{turb}\mathbf{(}l,k^{\prime }\mathbf{)}\text{ }=\sqrt{2}\mu l%
\sqrt{k^{\prime }},  \label{eq:KPrelation}
\end{equation}%
where $l(x,t)$\ has units of length (a mixing length or turbulent length
scale) and $\mu $\ is a calibration parameter. Determining $\nu
_{turb}(\cdot )$\ then reduces to modelling the unknowns $l,k^{\prime }$\ in
terms of computable flow variables and then calibrating $\mu $.

In all cases, two central questions, addressed herein via \textit{a
posteriori} computable conditions, arise: \textit{Is an eddy viscosity
necessary?} and \textit{Does the model fail?}

\textbf{Question 1. Is an eddy viscosity model necessary?}\textit{\ }%
Phenomenology and many numerical tests suggests that an under-resolved
simulation will be under-diffused and energy will accumulate in the smallest
resolved scale (non-physical $O(\triangle x)$\ oscillations).\textit{\ }The
classical interpretation has been that eddy viscosity is necessary if the
mesh does not resolve energetically significant eddies ($\triangle x\simeq 
\mathcal{R}e^{-3/4}L$, the Kolmogorov micro-scale). Answering question 1,
Theorem 1, Section 3 shows, surprisingly, that \textit{if the mesh resolves
the Taylor microscale (if }$\ \triangle x\simeq \sqrt{15}Re^{-1/2}L$\textit{%
) then the flow in the aggregate is not under diffused}.

\textbf{Question 2. Does the model fail? } Eddy viscosity models most commonly\footnote{%
Other failure modes, not considered herein, do occur intermittently when
reproducing observed flow phenomena requires brief intervals of negative
eddy viscosity values, resulting in numerical instabilities, Starr \cite{S68}%
. Simulations can also fail by having a correct aggregate model dissipation
but an incorrect distribution.}  fail by over damping
the solution, either producing a lower $\mathcal{R}e$\ flow or even driving
the solution to a nonphysical steady state. One can compute the aggregate
model dissipation, $\int \nu _{turb}|\nabla ^{s}u|^{2}dx$, and signal
failure if too large. (Like a diagnosis that a patient ``looks sick," this
offers little insight into the cause or its correction.) Theorem 2, Section
4 separates out the effect of the chosen turbulent viscosity
parameterization from the symmetric gradient, proving%
\begin{equation*}
\text{time-average model energy dissipation}\leq \left( \frac{1}{2}+\mathcal{%
R}e^{-1}+\frac{avg(\mathbf{\nu }_{turb})}{LU}\right) \frac{U^{3}}{L},
\end{equation*}%
where $avg(\cdot )$ denotes a space-time average defined precisely in
Section 2. The term $avg(\nu _{turb})/LU$ is \textit{a computable statistic
which, if }$O(1)$\textit{, implies the eddy viscosity model does not over
diffuse the flow}. From (\ref{eq:KPrelation}), $\mathbf{\nu }_{turb}$ has
two contributors: the parameterization of $l$ and $k^{\prime }$. Further,
Theorem \ref{thm:turb_intens1}, Section \ref{sec:EV_failure} shows $avg(%
\mathbf{\nu }_{turb})/LU=O(1)$ if $avg(l)/L=O(1)$ and the model's predicted
turbulent intensity (derived from the $k^{\prime }$ parameterization) $I_{%
\text{model}}=O(1)$ (see Section 4 for definitions). This follows from
estimate \eqref{eqn:sec4-1} in Theorem \ref{thm:turb_intens2}, Section \ref%
{sec:EV_failure}:%
\begin{equation*}
\text{ }\frac{avg(\mathbf{\nu }_{turb})}{LU}\leq \mu \frac{avg(l)}{L}\sqrt{%
I_{\text{model}}}\text{,}
\end{equation*}%
indicating the evolution of \textit{the model length scale and the model's
predicted turbulent intensity are determining statistics to monitor}. The
importance of this result is that the three computable quantities 
\begin{equation*}
\frac{avg(\mathbf{\nu }_{turb})}{LU},\frac{avg(l)}{L},I_{\text{model}},
\end{equation*}%
can all be monitored in a calculation. As long as they are $O(1)$, the
aggregate eddy viscosity is not over dissipating the (aggregate) flow. If
too large, their spacial distribution can be checked and the resulting
information used to isolate the cause and improve its parameterization.

{\textbf{Neural network (NN) based parameterizations} have seen an explosion of interest in determining these quantities, e.g., \cite{LKT16}, \cite{SALL19}. While NN based approximations have been successful, they lack theoretical guarantees of stability and convergence. These statistics
can be used to indicate the need to retrain the network or incorporated as a constraint into the training procedure.}

We therefore consider the eddy viscosity model (\ref{eq:EVmodel}). Let $%
||\cdot ||$ denote the usual $L^{2}$ norm. Taking the dot product with the
solution and integrating in space and time shows that a classical solution
satisfies the energy equality (e.g. \cite{DG95})%
\begin{gather}
\frac{1}{2}||u(T)||^{2}+\int_{0}^{T}\int_{\Omega }[2\nu +\nu
_{turb}(x,t)]|\nabla ^{s}u(x,t)|^{2}dxdt=  \label{eq:EnergyEquality} \\
=\frac{1}{2}||u_{0}||^{2}+\int_{0}^{T}(f,u(t))\,dt.  \notag
\end{gather}%
The model's space-averaged energy dissipation rate is thus $\varepsilon
=\varepsilon _{0}+\varepsilon _{turb}$ where%
\begin{equation*}
\varepsilon _{0}=\frac{1}{|\Omega |}\int_{\Omega }2\nu |\nabla
^{s}u(x,t)|^{2}dx\text{ and }\varepsilon _{turb}=\frac{1}{|\Omega |}%
\int_{\Omega }\nu _{turb}(x,t)|\nabla ^{s}u(x,t)|^{2}dx.
\end{equation*}%
We assume that solutions exist for the model and satisfy a standard energy
inequality. There has been slow but steady progress on an existence theory
for eddy viscosity models, summarized in Chac\'on-Rebollo and Lewandowski \cite%
{RL14}, but many open questions remain since the number of models seems to
be increasing faster than their analytic foundations develop.

\textbf{Assumption:} \textit{We assume that weak solutions of (\ref%
{eq:EVmodel}) exist\footnote{%
Even in the absence of a complete existence theory, the analysis of energy
dissipation rates can be performed for variational discretizations in space
(such as finite element methods or spectral methods). The same sequence of
steps shows that the discrete solutions satisfy the same energy dissipation
rate bounds uniformly in any space discretization parameter (such as mesh
width or frequency cutoff). Since the primary utility of turbulence models
is to account for breaking the communication between the inertial range and
dissipation range in numerical simulations\ after space discretizations,
this analysis is highly relevant for the uses of the models. It however adds
significant notational complexity without requiring any new mathematical
ideas or even steps, we shall assume the above about the continuum model for
purposes of greater clarity.} for any divergence free }$u_{0},f\in L^{2}$%
\textit{\ and satisfy the energy inequality}%
\begin{gather}
\frac{1}{2}\frac{1}{|\Omega |}||u(T)||^{2}+  \label{eq:EnergyInequality} \\
+\int_{0}^{T}\frac{1}{|\Omega |}\int_{\Omega }2\nu |\nabla
^{s}u(x,t)|^{2}+\nu _{turb}(x,t)|\nabla ^{s}u(x,t)|^{2}dxdt  \notag \\
\leq \frac{1}{2}\frac{1}{|\Omega |}||u_{0}||^{2}+\int_{0}^{T}\frac{1}{%
|\Omega |}(f,u(t))\,dt.  \notag
\end{gather}

\subsection{Related work}

The energy dissipation rate is a fundamental statistic of turbulence, e.g., 
\cite{P00}, \cite{V15}. In 1992, Constantin and Doering \cite{CD92}
established a direct link between phenomenology and NSE predicted energy
dissipation. This work builds on \cite{B78}, \cite{H72} (and others) and has
developed in many important directions subsequently e.g.\textit{, \cite{DF02}%
, \cite{H72}, \cite{V15}, \cite{W97}}. For some simple turbulence
models, \emph{a priori} analysis has shown that $avg(\mathbf{\varepsilon }%
)=O(U^{3}/L)$, e.g., \cite{D12}, \cite{D16}, \cite{D18}, \cite{DNL13}, \cite%
{L02}, \cite{L07}, \cite{L16}, \cite{LRS10}, \cite{LST10}, \cite{P17}, \cite%
{P19}, \cite{P19b}. Often these models are significantly simpler than ones
used in practice. For example, most of the models presented in Wilcox \cite%
{W98} evolve to high complexity. Many require different parameterizations
of $l$ and $k^{\prime }$ in different subregions (that must be identified 
\emph{a priori} through previous flow data). Since the number of models seems
to be growing faster than their \emph{a priori} analytical foundation, there is a
need for \emph{a posteriori} model analysis identifying (as herein) computable
quantities for model assessment. 

\section{Notation and preliminaries}

Let $\Omega $ be an open, regular domain in $\mathbb{R}^{d}$ $(d=2\text{ or }%
3)$. The $L^{2}(\Omega )$ norm and the inner product are $\Vert \cdot \Vert $
and $(\cdot ,\cdot )$. Likewise, the $L^{p}(\Omega )$ norms is $\Vert \cdot
\Vert _{L^{p}}$. $C$ represents a generic positive constant independent of $%
\nu ,\mathcal{R}e$, other model parameters, and the flow scales $U,L$\
defined below.

\begin{definition}
The finite and long time averages of a function $\phi (t)$\ are defined by%
\begin{equation*}
\left\langle \phi \right\rangle _{T}=\frac{1}{T}\int_{0}^{T}\phi (t)dt\text{
and }\left\langle \phi \right\rangle _{\infty }=\lim \sup_{T\rightarrow
\infty }\left\langle \phi \right\rangle _{T}.
\end{equation*}
\end{definition}

These satisfy 
\begin{gather}
\left\langle \phi \psi \right\rangle _{T}\leq \left\langle |\phi
|^{2}\right\rangle _{T}^{1/2}\left\langle |\psi |^{2}\right\rangle
_{T}^{1/2},\text{ }\left\langle \phi \psi \right\rangle _{\infty }\leq
\left\langle |\phi |^{2}\right\rangle _{\infty }^{1/2}\left\langle |\psi
|^{2}\right\rangle _{\infty }^{1/2}\text{\ }  \label{eq:CSinTime} \\
\text{and }\left\langle \left\langle \phi \right\rangle _{\infty
}\right\rangle _{\infty }=\left\langle \phi \right\rangle _{\infty }.  \notag
\end{gather}

\begin{definition}
The viscous and turbulent viscosity energy dissipation rate (per unit
volume) are%
\begin{equation*}
\varepsilon _{0}(u)=\frac{1}{|\Omega |}\int_{\Omega }2\nu |\nabla
^{s}u(x,t)|^{2}dx\text{ and }\varepsilon _{turb}(u)=\frac{1}{|\Omega |}%
\int_{\Omega }\nu _{turb}(x,t)|\nabla ^{s}u(x,t)|^{2}dx.
\end{equation*}%
The force, large scale velocity, and length scales, $F,U,L$, are%
\begin{gather}
F=\frac{1}{|\Omega |}||f||\text{, }U=\left\langle \frac{1}{|\Omega |}%
||u||^{2}\right\rangle _{\infty }^{\frac{1}{2}}\text{ , \ }U^{\prime
}=\left\langle \frac{1}{|\Omega |}||u^{\prime }||^{2}\right\rangle _{\infty
}^{\frac{1}{2}}  \label{eq:ULscales} \\
L=\min \left\{ |\Omega |^{\frac{1}{3}},\frac{F}{||\nabla f(\cdot )||_{\infty
}},\frac{F}{\frac{1}{|\Omega |}||\nabla f||^{2}}\right\} .  \notag
\end{gather}
\end{definition}

$L$ has units of length and satisfies%
\begin{equation}
||\nabla f||_{\infty }\leq \frac{F}{L}\text{ and }\frac{1}{|\Omega |}%
||\nabla f||^{2}\leq \frac{F^{2}}{L^{2}}.  \label{eq:FandLproperties}
\end{equation}%
Dimensional consistency (the Kolmogorov-Prandtl relation) requires $\mathbf{%
\nu }_{turb}\mathbf{(}l,k^{\prime }\mathbf{)}$ $=\sqrt{2}\mu l\sqrt{%
k^{\prime }}.$\ Thus, picking $\mathbf{\nu }_{turb}$ means a choice for $%
l(x,t)$ and a model $k_{\text{model}}^{\prime }$ for $k^{\prime }$\ are
induced. Since $k^{\prime }=\frac{1}{2}|u^{\prime }|^{2}$ this determines a
model for $|u^{\prime }|\simeq |u^{\prime }|_{\text{model}}=\sqrt{2k_{\text{%
model}}^{\prime }}$.

\begin{definition}
Define the velocity scales $U,U^{\prime },U_{\text{model}}^{\prime }$ by%
\begin{equation*}
U=\left\langle \frac{1}{|\Omega |}||u||^{2}\right\rangle _{\infty
}^{1/2}{},U_{\text{model}}^{\prime }=\left\langle \frac{1}{|\Omega |}%
\int_{\Omega }2k^{\prime }dx\right\rangle _{\infty }^{1/2}{}\text{ and }%
U^{\prime }=\left\langle \frac{1}{|\Omega |}\int_{\Omega }|u^{\prime
}|^{2}dx\right\rangle _{\infty }^{1/2}{}.
\end{equation*}
\end{definition}

It has not been necessary herein to specify the initial average leading to
the eddy viscosity term and used to define $U^{\prime }$. Our intuition is
that for a properly defined (and commonly used) averaging operations $%
U^{\prime }\leq U$ and thus $0\leq I(u )\leq 1$.

\begin{definition}
The models' predicted \textbf{turbulent intensity} is 
\begin{equation*}
\text{ }I_{\text{model}}(u)=\left( \frac{U_{\text{model}}^{\prime }}{U}%
\right) ^{2}.
\end{equation*}%
The average \textbf{model length-scale} and \textbf{average turbulent
viscosity} are%
\begin{eqnarray*}
avg(l) &=&\left\langle \frac{1}{|\Omega |}||l(x,t)||^{2}\right\rangle
_{\infty }^{1/2}{}, \\
avg(\mathbf{\nu }_{T}) &=&\left\langle \frac{1}{|\Omega |}\int_{\Omega }|%
\mathbf{\nu }_{turb}(x,t)|dx\right\rangle _{\infty }.
\end{eqnarray*}
\end{definition}

\section{\textbf{Is an eddy viscosity model necessary?}}

This is a question that can only be sensibly asked after discretization in
space and with $\mathbf{\nu }_{T}=0$. (Thus in this section $U$ represents
the NSE velocity scale.) For the chosen numerical (spacial) discretization,
we assume that (i) \textit{no model or numerical dissipation
is present} (A1 below), (ii)\textit{\ the largest discrete gradient
representable is proportional to }$1/meshwidth$ (A2 below, see \cite{C02}, \cite{HH92}, 
 and \cite{Z68} for proofs in specific settings) and, as kinetic
energy is concentrated in the largest scales, (iii) \textit{the discrete
kinetic energy is comparable to the true kinetic energy} (A3 below).

\textbf{A1. [No model or numerical dissipation]} \textit{The total energy
dissipation rate of} $u^{h}$\ \textit{is} $\varepsilon _{0}(u^{h})$.

\textbf{A2. [Inverse Assumption}] \textit{There is a parameter }$h=\triangle
x$\textit{, representing a typical meshwidth, and an }$O(1)$\textit{\
constant }$C_{I}$\textit{\ such that for all discrete velocities }$u^{h}$%
\begin{equation*}
||\nabla ^{s}u^{h}||\leq C_{I}h^{-1}||u^{h}||.
\end{equation*}

\textbf{A3. [Assumption on energy of approximate velocity]}. \textit{There
are constants }$c_{E}$\textit{, }$C_{E}$\textit{\ such that the kinetic
energy of the true and approximate velocities satisfy}%
\begin{equation*}
0<c_{E}\leq \frac{U_{h}}{U}=\sqrt{\frac{\left\langle
||u^{h}||^{2}\right\rangle _{\infty }}{\left\langle ||u||^{2}\right\rangle
_{\infty }}}\leq C_{E}<\infty .
\end{equation*}

\bigskip

\textbf{Definition 2.1.} \textit{The Taylor microscale }$\lambda _{T}$ (%
\textit{e.g., \cite{A98}, \cite{D15}, \cite{P00}, \cite{T35})} \textit{of
the fluid velocity }$u(x,t)$\textit{\ is}%
\begin{equation}
\lambda _{T}(u):=\left( \frac{\frac{1}{15}\left\langle ||\nabla
u||^{2}\right\rangle _{\infty }}{\left\langle ||u||^{2}\right\rangle
_{\infty }}\right) ^{-1/2}.  \label{eq:TaylorDefinition}
\end{equation}

For fully developed, $3d$ turbulent flows (away from walls), it is known, {e.g., \cite{A98}, \cite{D15}, \cite{P00}}, that $\lambda _{T}$ is
significantly larger than the Kolmogorov microscale and scales with the
Reynolds number as%
\begin{equation}  \label{taylor_microscale_scaling}
\lambda _{T}\simeq \mathcal{R}e^{-1/2}L.
\end{equation}%
The Taylor microscale $\lambda _{T}(u)$\ represents an average length of the
velocity $u$. For example, one can have $\mathcal{R}e>>1$\ , but $\lambda
_{T}=O(1)$\ for artificially constructed/manufactured laminar velocities,
such as the Taylor-Green vortex \cite{B05}, \cite{GT37}.

We then have the following.

\begin{theorem}
\textit{Let A1, A2 and A3 hold. If the meshwidth }$h>>2(C_{I}C_{E})\sqrt{15}%
\mathcal{R}e^{-1/2}L,$\textit{\ then }%
\begin{equation*}
\left\langle \varepsilon (u^{h})\right\rangle _{\infty }<<\frac{U^{3}}{L}%
\text{ and }\left\langle \varepsilon (u^{h})\right\rangle _{\infty
}\rightarrow 0\text{ as }\mathcal{R}e\rightarrow \infty .
\end{equation*}

\textit{Contrarily, }$\left\langle \varepsilon (u^{h})\right\rangle _{\infty
}\simeq \frac{U^{3}}{L}$\textit{\ if the Taylor microscale of the computed
solution }$u^{h}$\textit{\ satisfies }%
\begin{equation*}
\lambda _{T}(u^{h})\leq \frac{\sqrt{30}}{2}\mathcal{R}e^{-1/2}L.
\end{equation*}
\end{theorem}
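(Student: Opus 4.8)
The plan is to reduce both claims to the exact expression for the time-averaged dissipation furnished by \textbf{A1}, and then to insert the inverse inequality \textbf{A2} for the first claim and the definition of the Taylor microscale for the second. Since $\mathbf{\nu}_{T}=0$ in this section, \textbf{A1} gives, pointwise in time and then in the long-time average,
\begin{equation*}
\left\langle\varepsilon(u^h)\right\rangle_\infty = \frac{2\nu}{|\Omega|}\left\langle\|\nabla^s u^h\|^2\right\rangle_\infty,
\end{equation*}
and I would record the conversions $\nu = LU/\mathcal{R}e$, $\langle\|u\|^2\rangle_\infty = |\Omega|U^2$ (from the definition of $U$), and $\langle\|u^h\|^2\rangle_\infty = |\Omega|U_h^2$.

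For the first (coarse-mesh) claim I would apply \textbf{A2} at each time $t$, $\|\nabla^s u^h\|^2 \le C_I^2 h^{-2}\|u^h\|^2$, average, and then use the upper bound in \textbf{A3}, $\langle\|u^h\|^2\rangle_\infty \le C_E^2 |\Omega| U^2$, to obtain
\begin{equation*}
\left\langle\varepsilon(u^h)\right\rangle_\infty \le 2 C_I^2 C_E^2\, \nu h^{-2} U^2 = 2 C_I^2 C_E^2\,\frac{L^2}{\mathcal{R}e\, h^2}\,\frac{U^3}{L}.
\end{equation*}
The hypothesis $h \gg 2 C_I C_E\sqrt{15}\,\mathcal{R}e^{-1/2}L$ means $h^2 \gg 60\, C_I^2 C_E^2\,\mathcal{R}e^{-1}L^2$, so the ratio $L^2/(\mathcal{R}e\,h^2)$ is $\ll 1/(60\,C_I^2 C_E^2)$ and hence $\langle\varepsilon(u^h)\rangle_\infty \ll \tfrac{1}{30}\,U^3/L$. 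For the limit, with the mesh and $U,L$ held fixed the bound reads $\langle\varepsilon(u^h)\rangle_\infty \le 2 C_I^2 C_E^2\, \nu h^{-2} U^2$, and $\nu = LU/\mathcal{R}e \to 0$ forces the dissipation to $0$ as $\mathcal{R}e\to\infty$.

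For the second (resolved-microscale) claim I would first convert the symmetric gradient to the full gradient: for a discretely divergence-free velocity vanishing on $\partial\Omega$, integration by parts annihilates the cross term and yields $\|\nabla^s u^h\|^2 = \tfrac12\|\nabla u^h\|^2$. Substituting this and then inserting the definition \eqref{eq:TaylorDefinition} of $\lambda_T(u^h)$ gives the exact identity
\begin{equation*}
\left\langle\varepsilon(u^h)\right\rangle_\infty = \frac{\nu}{|\Omega|}\left\langle\|\nabla u^h\|^2\right\rangle_\infty = 15\,\nu\,\lambda_T(u^h)^{-2} U_h^2 = 15\,\frac{LU}{\mathcal{R}e}\,\lambda_T(u^h)^{-2}U_h^2.
\end{equation*}
The hypothesis $\lambda_T(u^h) \le \tfrac{\sqrt{30}}{2}\mathcal{R}e^{-1/2}L$ gives $\lambda_T(u^h)^{-2}\ge \tfrac{2}{15}\mathcal{R}e\, L^{-2}$, and the lower bound $U_h^2 \ge c_E^2 U^2$ of \textbf{A3} then produces $\langle\varepsilon(u^h)\rangle_\infty \ge 2 c_E^2\, U^3/L$, i.e.\ dissipation of the physical order $U^3/L$ rather than the vanishing value of the first case.

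I expect the one genuinely structural step to be the identity $\|\nabla^s u^h\|^2 = \tfrac12\|\nabla u^h\|^2$, which bridges the dissipation (written with $\nabla^s$) and the Taylor microscale (defined with $\nabla$); it requires the discrete velocity to be incompressible and to satisfy the boundary condition, so I would state this explicitly as the operative property of the discretization. Everything else is a chain of elementary inequalities. I would also note that the hypothesis of the second claim delivers only the lower bound on the dissipation; the matching upper bound needed for a two-sided $\langle\varepsilon(u^h)\rangle_\infty \simeq U^3/L$ comes from the complementary physical scaling $\lambda_T \simeq \mathcal{R}e^{-1/2}L$ of \eqref{taylor_microscale_scaling}.
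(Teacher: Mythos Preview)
Your argument is correct and follows the paper's own proof closely: A1+A2+A3 for the coarse-mesh upper bound, then A1 together with the Taylor-microscale identity and A3 for the resolved case. You are in fact more careful than the paper on two points---explicitly invoking $\|\nabla^{s}u^{h}\|^{2}=\tfrac12\|\nabla u^{h}\|^{2}$ (for divergence-free, no-slip fields) to pass from the dissipation to the microscale definition, and correctly reading the one-sided hypothesis $\lambda_{T}(u^{h})\le \tfrac{\sqrt{30}}{2}\mathcal{R}e^{-1/2}L$ as yielding a \emph{lower} bound $\langle\varepsilon(u^{h})\rangle_{\infty}\gtrsim U^{3}/L$, with the matching upper bound supplied by the complementary scaling \eqref{taylor_microscale_scaling}; the paper records only the upper-bound direction of the same identity.
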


\begin{proof}
\textit{By A1, A2}%
\begin{eqnarray*}
\left\langle \varepsilon (u^{h})\right\rangle _{\infty } &=&2\nu
\left\langle ||\nabla ^{s}u^{h}||^{2}\right\rangle _{\infty }\leq 2\nu
C_{I}^{2}h^{-2}\left\langle ||u^{h}||^{2}\right\rangle _{\infty } \\
&\leq &2\nu C_{I}^{2}h^{-2}U_{h}^{2}=\frac{\nu }{LU}C_{I}^{2}\left( \frac{h}{%
L}\right) ^{-2}\left( \frac{U_{h}}{U}\right) ^{2}\frac{U^{3}}{L} \\
&\leq &2\left[ \mathcal{R}e^{-1}C_{I}^{2}C_{E}^{2}\left( \frac{h}{L}\right)
^{-2}\right] \frac{U^{3}}{L}\text{, by A3.}
\end{eqnarray*}%
\textit{Thus, the first case of under-dissipation occurs when the bracketed
term}%
\begin{equation*}
\mathcal{R}e^{-1}C_{I}^{2}C_{E}^{2}\left( \frac{h}{L}\right) ^{-2}<<\frac{1}{%
2}\text{ }\Leftrightarrow \text{ }h>>\sqrt{2}\left( C_{I}C_{E}\right) 
\mathcal{R}e^{-1/2}L=\mathcal{O}\left( \lambda _{T}(u)\right) .
\end{equation*}%
\textit{For the second claim, by A1, A3,}%
\begin{gather*}
\left\langle \varepsilon (u^{h})\right\rangle _{\infty }=2\nu \left\langle
||\nabla ^{s}u^{h}||^{2}\right\rangle _{\infty }=2\nu \frac{\left\langle
||\nabla ^{s}u^{h}||^{2}\right\rangle _{\infty }}{\left\langle
||u^{h}||^{2}\right\rangle _{\infty }}\left\langle
||u^{h}||^{2}\right\rangle _{\infty } \\
=30\frac{\nu }{LU}\lambda _{T}(u^{h})^{-2}LUU_{h}^{2} \\
=30\mathcal{R}e^{-1}\left( \frac{\lambda _{T}(u^{h})}{L}\right) ^{-2}\left( 
\frac{U_{h}}{U}\right) ^{2}\frac{U^{3}}{L}\leq 30C_{E}^{2}\left[ \mathcal{R}%
e^{-1}\left( \frac{\lambda _{T}(u^{h})}{L}\right) ^{-2}\right] \frac{U^{3}}{L%
}.
\end{gather*}%
\textit{The bracketed term is }$\mathcal{O}(1)$\textit{\ provided }$\lambda
_{T}(u^{h})\simeq \sqrt{30}Re^{-1/2}L$\textit{, as claimed.}
\end{proof}

\section{Does the eddy viscosity model fail?}

\label{sec:EV_failure} The most common failure mode of eddy viscosity models
is model over dissipation. Model dissipation can be studied at the level of
the continuum model \eqref{eq:EVmodel}, that is, without a spacial
discretization. Since this simplifies notation, we do so in this section.
Consider therefore the model \eqref{eq:EVmodel} and recall that the data $%
u_{0}(x),f(x)$\ is smooth, divergence free, and both vanish on $\partial
\Omega $. The next theorem establishes that model dissipation is independent
of solution gradients and controlled by the average of the chosen eddy
viscosity parameterization $avg(\mathbf{\nu }_{turb})$ 
\begin{equation*}
avg(\mathbf{\nu }_{turb})=\left\langle \frac{1}{|\Omega |}\int_{\Omega }|%
\mathbf{\nu }_{turb}(x,t)|dx\right\rangle _{\infty }.
\end{equation*}

\begin{theorem}
\label{thm:turb_intens1} The time averaged rate of total energy dissipation
for the eddy viscosity model satisfies the following. For any $0<\beta <1,$%
\begin{equation*}
\left\langle \varepsilon _{0}+\varepsilon _{turb}\right\rangle \leq \left( 
\frac{2}{2-\beta }+\frac{2}{\beta (2-\beta )}\mathcal{R}e^{-1}+\frac{1}{%
\beta (2-\beta )}\frac{avg(\mathbf{\nu }_{turb})}{LU}\right) \frac{U^{3}}{L}.
\end{equation*}
\end{theorem}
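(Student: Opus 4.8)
The plan is to start from the energy inequality \eqref{eq:EnergyInequality}, discard the nonnegative kinetic-energy term $\tfrac{1}{2|\Omega|}\|u(T)\|^2$ at the final time, and divide by $T$ to obtain a bound on $\langle \varepsilon_0 + \varepsilon_{turb}\rangle_T$ in terms of $\tfrac{1}{2T|\Omega|}\|u_0\|^2$ and the forcing work $\langle \tfrac{1}{|\Omega|}(f,u)\rangle_T$. The first term vanishes as $T\to\infty$, so the whole game is to estimate the time-averaged forcing work $\langle \tfrac{1}{|\Omega|}(f,u)\rangle_\infty$ and absorb the part of it that involves gradients of $u$ back into the dissipation on the left-hand side.

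The key mechanism is that $f$ vanishes on $\partial\Omega$ and $\nabla\cdot u=0$, so I would rewrite the forcing work by a careful integration by parts. The standard Constantin--Doering--type trick is to relate $(f,u)$ to $(\nabla f, \text{something})$ so that the length scale $L$ enters through the inequalities \eqref{eq:FandLproperties}. Concretely I expect to write $\tfrac{1}{|\Omega|}(f,u) = \tfrac{1}{|\Omega|}(f,u)$ and estimate it by Cauchy--Schwarz in space against $\|\nabla^s u\|$ (or $\|u\|$), introducing a free Young's-inequality parameter. The role of $\beta$ in the statement is exactly this Young parameter: splitting $ab \le \tfrac{\beta}{2}\,c\,a^2 + \tfrac{1}{2\beta c}\,b^2$ lets me put a fraction $\beta$ of the available viscous dissipation coefficient $2\nu$ on the gradient term so it can be subtracted from the left, while the complementary piece produces the $F^2$ contribution. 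After applying \eqref{eq:FandLproperties} to bound $\tfrac{1}{|\Omega|}\|\nabla f\|^2 \le F^2/L^2$ and recalling $F = \tfrac{1}{|\Omega|}\|f\|$, dimensional bookkeeping with $\mathcal{R}e = LU/\nu$ and $U = \langle \tfrac{1}{|\Omega|}\|u\|^2\rangle_\infty^{1/2}$ should convert every term into a multiple of $U^3/L$.

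In detail, the steps in order are: (1) take long-time averages in \eqref{eq:EnergyInequality}, drop the terminal energy, and reduce to bounding $\langle \tfrac{1}{|\Omega|}(f,u)\rangle_\infty$; (2) integrate by parts and apply Cauchy--Schwarz to express the forcing work against a gradient norm of $u$, using $f|_{\partial\Omega}=0$ and incompressibility to justify the boundary-term cancellation; (3) apply Young's inequality with weight $\beta$ so that a $\tfrac{\beta}{2}(2\nu)\langle\|\nabla^s u\|^2\rangle_\infty$ term appears and can be moved to the left, noting $\langle \varepsilon_0\rangle = 2\nu\langle \tfrac{1}{|\Omega|}\|\nabla^s u\|^2\rangle_\infty$; (4) use \eqref{eq:FandLproperties} together with $F = U^2/L$-type scaling (obtained by comparing $F$ and $U$ through the averaged energy balance) to rewrite the residual forcing term; and (5) collect the coefficients $\tfrac{2}{2-\beta}$, $\tfrac{2}{\beta(2-\beta)}\mathcal{R}e^{-1}$, and $\tfrac{1}{\beta(2-\beta)}\tfrac{avg(\nu_{turb})}{LU}$, where the $avg(\nu_{turb})$ term arises from bounding $\varepsilon_{turb}$ directly rather than absorbing it.

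The step I expect to be the main obstacle is getting the precise constants $\tfrac{2}{2-\beta}$ and $\tfrac{1}{\beta(2-\beta)}$ to come out exactly, since these depend delicately on how the factor of $2$ in $2\nu$ and in $\nabla^s u$ is distributed across the Young splitting and on the normalization of $F$ and $L$. The denominator $2-\beta$ strongly suggests that after moving $\tfrac{\beta}{2}$ of the total dissipation to the left one divides through by $1-\tfrac{\beta}{2} = \tfrac{2-\beta}{2}$, which is the source of that factor; reverse-engineering the bookkeeping so that the $\varepsilon_{turb}$ contribution lands with coefficient $\tfrac{1}{\beta(2-\beta)}$ rather than being absorbed is the delicate part. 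A secondary subtlety is justifying that the forcing work can legitimately be controlled by the symmetric gradient $\nabla^s u$ (not just $\nabla u$) — this likely uses Korn's inequality or the divergence-free, homogeneous-boundary structure so that $\|\nabla^s u\|$ controls $\|\nabla u\|$ up to an $O(1)$ constant absorbed into $C$.
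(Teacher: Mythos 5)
Your overall frame (time-average the energy inequality, drop the terminal kinetic energy, reduce everything to the forcing work) matches the paper's opening step, and you correctly reverse-engineered that the factor $\tfrac{2}{2-\beta}$ comes from absorbing $\tfrac{\beta}{2}\left\langle \varepsilon _{0}+\varepsilon _{turb}\right\rangle$ into the left-hand side. But the core mechanism you propose --- integrate $(f,u)$ by parts, Cauchy--Schwarz it against $\Vert \nabla ^{s}u\Vert $, and use a $\beta$-weighted Young inequality to absorb that gradient term into the viscous dissipation --- is not what the paper does, and it cannot produce the stated bound. If you pay for the forcing work with a piece of the viscous dissipation, the complementary Young term necessarily carries a factor $1/(\beta \nu )$, so your final estimate picks up a factor of order $\mathcal{R}e^{+1}$, not the $\mathcal{R}e^{-1}$ in the theorem; the entire point of the result is that $\left\langle \varepsilon _{0}+\varepsilon _{turb}\right\rangle$ stays $O(U^{3}/L)$ uniformly as $\mathcal{R}e\rightarrow \infty $. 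Moreover, on your route the eddy viscosity never enters the right-hand side at all, since the forcing work $(f,u)$ does not contain $\nu _{turb}$; hence there is no way for the statistic $avg(\mathbf{\nu }_{turb})/LU$ to appear. Your remark that this term ``arises from bounding $\varepsilon _{turb}$ directly'' has no content: $\varepsilon _{turb}$ involves precisely the unknown weighted gradients you are trying to control.

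The missing idea is the Doering--Foias device. Keep the forcing work crude, $\left\langle \frac{1}{|\Omega |}(f,u)\right\rangle _{T}\leq F\left\langle \frac{1}{|\Omega |}\Vert u\Vert ^{2}\right\rangle _{T}^{1/2}$ (no gradients, no absorption), and instead bound $F$ itself in terms of flow quantities by using $f$ as a test function in the momentum equation \eqref{eq:EVmodel} and time-averaging. Your substitute for this --- ``comparing $F$ and $U$ through the averaged energy balance'' --- cannot work, because the energy balance only controls $u$ in terms of $f$, never $f$ in terms of $u$. Testing with $f$ yields the identity \eqref{eq:Step2}: $F^{2}$ equals an $O(1/T)$ term, plus the convective term $-\left\langle \frac{1}{|\Omega |}(uu,\nabla f)\right\rangle _{T}$ (which via \eqref{eq:FandLproperties} gives the leading contribution $\frac{F}{L}\left\langle \frac{1}{|\Omega |}\Vert u\Vert ^{2}\right\rangle _{T}$, i.e.\ the $U^{3}/L$ term), plus the stress terms $\left\langle \frac{1}{|\Omega |}\int_{\Omega }2\nu \nabla ^{s}u:\nabla ^{s}f+\mathbf{\nu }_{turb}\nabla ^{s}u:\nabla ^{s}f\,dx\right\rangle _{T}$. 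It is to these stress terms that the $\beta$-Young splitting is applied: the pieces proportional to $\left\langle \varepsilon _{0}\right\rangle _{T}$ and $\left\langle \varepsilon _{turb}\right\rangle _{T}$ are what get absorbed into the left of the energy inequality (the source of $1-\beta /2$), while the complementary pieces produce $\frac{UF}{\beta }\frac{\nu }{L^{2}}$ (the $\mathcal{R}e^{-1}$ term) and, after writing $\mathbf{\nu }_{turb}=\sqrt{\mathbf{\nu }_{turb}}\sqrt{\mathbf{\nu }_{turb}}$ inside the turbulent stress term and Cauchy--Schwarzing in space, $\frac{1}{2\beta }\frac{UF}{L^{2}}\left\langle \frac{1}{|\Omega |}\int_{\Omega }\mathbf{\nu }_{turb}dx\right\rangle _{T}$ --- which is exactly where $avg(\mathbf{\nu }_{turb})/LU$ is born. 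Finally, your worry about Korn's inequality is moot: in the correct argument gradients of $u$ only ever appear as $\nabla ^{s}u$ paired against $\nabla ^{s}f$, and invoking Korn would spoil the explicit constants in the statement.
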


The key term is $\frac{avg(\mathbf{\nu }_{turb})}{LU}$. For this term we can
further separate the effects of the choice of $l$ and $k^{\prime }$ in the
model as follows.

\begin{theorem}
\label{thm:turb_intens2} We have%
\begin{equation}  \label{eqn:sec4-1}
\frac{avg(\mathbf{\nu }_{turb})}{LU}\leq \mu \frac{avg(l)}{L}\sqrt{I_{\text{%
model}}(u)}=\mu \frac{avg(l)}{L}\frac{U_{\text{model}}^{\prime }}{U^{\prime }%
}\sqrt{I(u)}.
\end{equation}
\end{theorem}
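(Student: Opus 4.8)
The plan is to start directly from the definition $avg(\mathbf{\nu}_{turb})=\left\langle \frac{1}{|\Omega|}\int_{\Omega}|\mathbf{\nu}_{turb}(x,t)|\,dx\right\rangle_{\infty}$ and substitute the Kolmogorov-Prandtl relation \eqref{eq:KPrelation}. Since $\mu>0$, I would write
\[
|\mathbf{\nu}_{turb}| = \sqrt{2}\,\mu\,|l|\,\sqrt{k^{\prime}} = \mu\,|l|\,\sqrt{2k^{\prime}} = \mu\,|l|\,|u^{\prime}|_{\text{model}},
\]
using $|u^{\prime}|_{\text{model}}=\sqrt{2k^{\prime}_{\text{model}}}$. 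This is the key algebraic step that eventually lets $U^{\prime}_{\text{model}}$ appear: the factor $\sqrt{2}$ must be absorbed into $\sqrt{k^{\prime}}$ to form $\sqrt{2k^{\prime}}$ rather than left attached to $\mu$, and working with $|l|$ sidesteps any sign assumption on the chosen length scale (only $l^2=|l|^2$ enters through $avg(l)$).

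Next I would apply Cauchy-Schwarz twice, once in space and once in time. For fixed $t$, Cauchy-Schwarz in $L^{2}(\Omega)$ gives
\[
\frac{1}{|\Omega|}\int_{\Omega}|l|\,|u^{\prime}|_{\text{model}}\,dx \leq \left(\frac{1}{|\Omega|}\int_{\Omega}|l|^{2}\,dx\right)^{1/2}\left(\frac{1}{|\Omega|}\int_{\Omega}2k^{\prime}\,dx\right)^{1/2} = \left(\tfrac{1}{|\Omega|}\|l\|^{2}\right)^{1/2}\left(\tfrac{1}{|\Omega|}\int_{\Omega}2k^{\prime}\,dx\right)^{1/2}.
\]
Calling the two time-dependent factors $\phi(t)$ and $\psi(t)$, the time-averaging Cauchy-Schwarz inequality \eqref{eq:CSinTime}, namely $\langle\phi\psi\rangle_{\infty}\leq\langle\phi^{2}\rangle_{\infty}^{1/2}\langle\psi^{2}\rangle_{\infty}^{1/2}$, then yields
\[
avg(\mathbf{\nu}_{turb}) \leq \mu\,\langle\phi\psi\rangle_{\infty} \leq \mu\,\left\langle\tfrac{1}{|\Omega|}\|l\|^{2}\right\rangle_{\infty}^{1/2}\left\langle\tfrac{1}{|\Omega|}\int_{\Omega}2k^{\prime}\,dx\right\rangle_{\infty}^{1/2} = \mu\,avg(l)\,U^{\prime}_{\text{model}},
\]
where the last equality is exactly the definitions of $avg(l)$ and $U^{\prime}_{\text{model}}$.

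Finally I would divide by $LU$ and rewrite the velocity ratio. Since $I_{\text{model}}(u)=(U^{\prime}_{\text{model}}/U)^{2}$, dividing gives $\frac{avg(\mathbf{\nu}_{turb})}{LU}\leq\mu\frac{avg(l)}{L}\frac{U^{\prime}_{\text{model}}}{U}=\mu\frac{avg(l)}{L}\sqrt{I_{\text{model}}(u)}$, the first asserted bound. For the second form I would insert $U^{\prime}/U^{\prime}$ and use $I(u)=(U^{\prime}/U)^{2}$, so that $\frac{U^{\prime}_{\text{model}}}{U}=\frac{U^{\prime}_{\text{model}}}{U^{\prime}}\cdot\frac{U^{\prime}}{U}=\frac{U^{\prime}_{\text{model}}}{U^{\prime}}\sqrt{I(u)}$. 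I do not expect a genuine obstacle: the proof is two Cauchy-Schwarz steps plus bookkeeping. The only point demanding care is the normalization—the $1/|\Omega|$ weights must be distributed so each time-dependent factor matches the form in the definitions of $avg(l)$ and $U^{\prime}_{\text{model}}$, and the $\sqrt{2}$ must stay with $\sqrt{k^{\prime}}$ so that $\int_{\Omega}2k^{\prime}\,dx$, and hence $U^{\prime}_{\text{model}}$, emerges rather than an unnormalized $\int_{\Omega}k^{\prime}\,dx$.
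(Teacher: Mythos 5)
Your proof is correct and follows essentially the same route as the paper's: substitute the Kolmogorov--Prandtl relation so the $\sqrt{2}$ factors cancel and $|u'|_{\text{model}}$ appears, apply Cauchy--Schwarz in space and then in time, and identify the resulting factors with $avg(l)$ and $U'_{\text{model}}$ before dividing by $LU$. The only cosmetic difference is that you invoke the $\langle\cdot\rangle_{\infty}$ form of \eqref{eq:CSinTime} directly, while the paper works with finite-time averages $\langle\cdot\rangle_{T}$ and takes the limit superior at the end; these are equivalent.
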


As a consequence there follows.

\begin{corollary}
The time averaged energy rate of total energy dissipation for the general
eddy viscosity model satisfies the following. For any $0<\beta <1,$%
\begin{eqnarray*}
\left\langle \varepsilon _{0}+\varepsilon _{turb}\right\rangle _{\infty }
&\leq &\left( \frac{2}{2-\beta }+\frac{2}{\beta (2-\beta )}\mathcal{R}e^{-1}+%
\frac{1}{\beta (2-\beta )}\mu \frac{avg(l_{m})}{L}\frac{U_{m}^{\prime }}{U}%
\right) \frac{U^{3}}{L} \\
&&\text{and } \\
\left\langle \varepsilon _{0}+\varepsilon _{turb}\right\rangle _{\infty }
&\leq &\left( \frac{2}{2-\beta }+\frac{2}{\beta (2-\beta )}\mathcal{R}e^{-1}+%
\frac{1}{\beta (2-\beta )}\mu \frac{avg(l)}{L}\frac{U_{m}^{\prime }}{%
U^{\prime }}\sqrt{I(u)}\right) \frac{U^{3}}{L}.
\end{eqnarray*}
\end{corollary}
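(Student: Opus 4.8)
The plan is to obtain the Corollary by direct substitution, since it merely inserts the decomposition of the key term $avg(\mathbf{\nu}_{turb})/LU$ supplied by Theorem \ref{thm:turb_intens2} into the aggregate dissipation bound of Theorem \ref{thm:turb_intens1}. First I would record the conclusion of Theorem \ref{thm:turb_intens1}, observing that for each fixed $0<\beta<1$ its right-hand side is an increasing affine function of the single model-dependent quantity $avg(\mathbf{\nu}_{turb})/LU$, which appears with the \emph{positive} coefficient $\tfrac{1}{\beta(2-\beta)}$. Because this coefficient is positive, replacing $avg(\mathbf{\nu}_{turb})/LU$ by any upper bound preserves the inequality.

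Next I would invoke estimate \eqref{eqn:sec4-1} of Theorem \ref{thm:turb_intens2}, which furnishes two equivalent expressions for such an upper bound:
\[
\frac{avg(\mathbf{\nu}_{turb})}{LU}\leq \mu\frac{avg(l)}{L}\sqrt{I_{\text{model}}(u)}=\mu\frac{avg(l)}{L}\frac{U_{\text{model}}^{\prime}}{U^{\prime}}\sqrt{I(u)}.
\]
Substituting the first expression into the bound of Theorem \ref{thm:turb_intens1}, and recalling $I_{\text{model}}(u)=(U_{\text{model}}^{\prime}/U)^{2}$ so that $\sqrt{I_{\text{model}}(u)}=U_{\text{model}}^{\prime}/U$, produces the first displayed inequality of the Corollary (with $l_m$ and $U_m^{\prime}$ denoting the model length scale $l$ and $U_{\text{model}}^{\prime}$ respectively). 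Substituting instead the second expression produces the second displayed inequality.

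I do not anticipate a genuine obstacle: the analytical content is carried entirely by the two preceding theorems, and what remains is the bookkeeping of matching notation together with the sign check on the coefficient of $avg(\mathbf{\nu}_{turb})/LU$. The only point deserving a line of care is confirming that the two right-hand sides of \eqref{eqn:sec4-1} coincide, i.e. that $\sqrt{I_{\text{model}}(u)}=\tfrac{U_{\text{model}}^{\prime}}{U^{\prime}}\sqrt{I(u)}$; this follows at once from $I_{\text{model}}(u)=(U_{\text{model}}^{\prime}/U)^{2}$ and $I(u)=(U^{\prime}/U)^{2}$, whence both right-hand sides equal $\mu\frac{avg(l)}{L}\frac{U_{\text{model}}^{\prime}}{U}\cdot\frac{U^{3}}{L}$. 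Thus the two displayed estimates are the same bound written in two notations, and the Corollary is established.
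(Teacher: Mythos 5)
Your proposal is correct and is essentially the paper's own proof: the paper likewise obtains the Corollary by substituting the bound of Theorem \ref{thm:turb_intens2} for the term $avg(\mathbf{\nu }_{turb})/LU$ in Theorem \ref{thm:turb_intens1} and rearranging the last term via the definitions $I_{\text{model}}(u)=(U_{\text{model}}^{\prime }/U)^{2}$ and $I(u)=(U^{\prime }/U)^{2}$. Your additional remarks (the positivity of the coefficient $\tfrac{1}{\beta (2-\beta )}$ and the identification $l_{m}=l$, $U_{m}^{\prime }=U_{\text{model}}^{\prime }$) simply make explicit the bookkeeping the paper leaves implicit.
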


\begin{proof}
The claim follows by rearranging the last term in the estimate using the
definition of the turbulent intensity $I(u)=(U^{\prime }/U)^{2}$.
\end{proof}

As noted above, the importance of this result is that the three quantities 
\begin{equation*}
\frac{avg(\mathbf{\nu }_{turb})}{LU},\frac{avg(l)}{L},\frac{U_{m}^{\prime }}{%
U},
\end{equation*}%
are computable. If too large, their spacial distribution can be checked and
the resulting information used to improve the model.

\subsection{Proof of Theorem 1}

From (\ref{eq:EnergyEquality})%
\begin{equation}
\frac{1}{2T}\frac{1}{|\Omega |}||u(T)||^{2}+\left\langle \varepsilon
_{0}+\varepsilon _{turb}\right\rangle _{T}\leq \frac{1}{2T}\frac{1}{|\Omega |%
}||u_{0}||^{2}+\left\langle \frac{1}{|\Omega |}(f,u(t))\right\rangle _{T}\,,
\label{eq:AveregedEnergyIneq}
\end{equation}%
and standard arguments, it follows that, uniformly in $T$,%
\begin{equation}
\sup_{T\in (0,\infty )}||u(T)||^{2}\leq C(data)<\infty \text{ \ and }%
\left\langle \varepsilon _{0}+\varepsilon _{turb}\right\rangle _{T}\leq
C(data)<\infty .  \label{eq:Boundsonuand gradu}
\end{equation}%
For the RHS of the energy inequality, from (\ref{eq:CSinTime}) there follows 
\begin{equation*}
\left\langle \frac{1}{|\Omega |}(f,u(t))\right\rangle _{T}\,\leq F\sqrt{%
\left\langle \frac{1}{|\Omega |}||u(t)||^{2}\right\rangle _{T}},
\end{equation*}%
which, from (\ref{eq:AveregedEnergyIneq}), implies 
\begin{equation}
\left\langle \varepsilon _{0}+\varepsilon _{turb}\right\rangle _{T}\leq 
\mathcal{O}(\frac{1}{T})+F\left\langle \frac{1}{|\Omega |}%
||u||^{2}\right\rangle _{T}^{\frac{1}{2}}.  \label{eq:NewStep1}
\end{equation}%
To bound $F$ in terms of flow quantities, take the inner product of the
model (\ref{eq:EVmodel}) with $f(x)$, integrate by parts (using $\nabla
\cdot f=0$ and $f(x)=0$ on $\partial \Omega $), and average over $[0,T]$. This
gives%
\begin{gather}
F^{2}=\frac{(u(T)-u_{0},f)}{T|\Omega |}-\left\langle \frac{1}{|\Omega |}%
(uu,\nabla f)\right\rangle _{T}  \label{eq:Step2} \\
+\ \left\langle \frac{1}{|\Omega |}\int_{\Omega }2\nu \nabla ^{s}u:\nabla
^{s}f+\mathbf{\nu }_{turb}(x,t)\nabla ^{s}u:\nabla ^{s}fdx\right\rangle _{T}.
\notag
\end{gather}%
Analysis of the first three terms on the RHS parallels the NSE case in, e.g.%
\textit{, \cite{CD92}, \cite{DF02}, \cite{H72}, \cite{V15}, \cite{W97}}.
The fourth is the key, model-specific term. The first term on the RHS is\ $%
\mathcal{O}(1/T)$ by (\ref{eq:Boundsonuand gradu}). The second is bounded by
Holders inequality, (\ref{eq:CSinTime}), and (\ref{eq:FandLproperties}) as
follows%
\begin{gather*}
\left\langle \frac{1}{|\Omega |}(uu,\nabla f)\right\rangle _{T}\leq
\left\langle ||\nabla f(\cdot )||_{\infty }\frac{1}{|\Omega |}||u(\cdot
,t)||^{2}\right\rangle _{T} \\
\leq ||\nabla f(\cdot )||_{\infty }\left\langle \frac{1}{|\Omega |}||u(\cdot
,t)||^{2}\right\rangle _{T}\leq \frac{F}{L}\left\langle \frac{1}{|\Omega |}%
||u(\cdot ,t)||^{2}\right\rangle _{T}.
\end{gather*}%
The third term is bounded by analogous steps to the second. For any $0<\beta
<1$, we have%
\begin{gather*}
\left\langle \frac{1}{|\Omega |}\int_{\Omega }2\nu \nabla ^{s}u(x,t):\nabla
^{s}f(x)dx\right\rangle _{T} \\
\leq \left\langle \frac{4\nu ^{2}}{|\Omega |}||\nabla
^{s}u||^{2}\right\rangle _{T}^{\frac{1}{2}}\left\langle \frac{1}{|\Omega |}%
||\nabla ^{s}f||^{2}\right\rangle _{T}^{\frac{1}{2}}\leq \left\langle
\varepsilon _{0}\right\rangle _{T}^{\frac{1}{2}}\sqrt{2\nu }\frac{F}{L}\leq 
\frac{\beta F}{2U}\left\langle \varepsilon _{0}\right\rangle _{T}+\frac{UF}{%
\beta }\frac{\nu }{L^{2}}.
\end{gather*}%
The fourth, \textit{model dependent} term, is estimated successively as
follows%
\begin{gather*}
\left\langle \frac{1}{|\Omega |}\int_{\Omega }\mathbf{\nu }_{turb}\nabla
^{s}u(x,t):\nabla ^{s}f(x)dx\right\rangle _{T}\leq \left\langle \frac{1}{%
|\Omega |}\int_{\Omega }\sqrt{\mathbf{\nu }_{turb}}\left( \sqrt{\mathbf{\nu }%
_{turb}}|\nabla ^{s}u|\right) |\nabla ^{s}f|dx\right\rangle _{T} \\
\leq ||\nabla ^{s}f||_{L\infty }\left\langle \left( \frac{1}{|\Omega |}%
\int_{\Omega }\mathbf{\nu }_{turb}dx\right) ^{1/2}\left( \frac{1}{|\Omega |}%
\int_{\Omega }\mathbf{\nu }_{turb}|\nabla ^{s}u|^{2}dx\right)
^{1/2}dx\right\rangle _{T} \\
\leq ||\nabla ^{s}f||_{L\infty }\left\langle \left( \frac{1}{|\Omega |}%
\int_{\Omega }\mathbf{\nu }_{turb}dx\right) ^{1/2}\varepsilon
_{turb}^{1/2}\right\rangle _{T} \\
\leq \frac{F}{L}\left( \frac{U}{F}\left\langle \frac{1}{|\Omega |}%
\int_{\Omega }\mathbf{\nu }_{turb}dx\right\rangle _{T}\right) ^{1/2}\left( 
\frac{F}{U}\left\langle \varepsilon _{turb}\right\rangle _{T}\right) ^{1/2}
\\
\leq \frac{\beta }{2}\frac{F}{U}\left\langle \varepsilon
_{turb}\right\rangle _{T}+\frac{1}{2\beta }\frac{UF}{L^{2}}\left\langle 
\frac{1}{|\Omega |}\int_{\Omega }\mathbf{\nu }_{turb}dx\right\rangle _{T}.
\end{gather*}%
Using these estimates in the bound for $F^{2}$ yields%
\begin{eqnarray*}
F^{2} &\leq &\mathcal{O}\left( \frac{1}{T}\right) +\frac{F}{L}\left\langle 
\frac{1}{|\Omega |}||u||^{2}\right\rangle _{T}+\frac{\beta }{2}%
U^{-1}F\left\langle \varepsilon _{0}\right\rangle _{T}+\frac{1}{\beta }UF%
\frac{\nu }{L^{2}} \\
&&+\frac{\beta }{2}\frac{F}{U}\left\langle \varepsilon _{turb}\right\rangle
_{T}+\frac{1}{2\beta }\frac{UF}{L^{2}}\left\langle \frac{1}{|\Omega |}%
\int_{\Omega }\mathbf{\nu }_{turb}dx\right\rangle _{T}.
\end{eqnarray*}%
Thus, we have an estimate for $F\left\langle \frac{1}{|\Omega |}%
||u||^{2}\right\rangle _{T}^{\frac{1}{2}}$%
\begin{gather*}
F\left\langle \frac{1}{|\Omega |}||u||^{2}\right\rangle _{T}^{\frac{1}{2}%
}\leq \mathcal{O}\left( \frac{1}{T}\right) +\frac{1}{L}\left\langle \frac{1}{%
|\Omega |}||u||^{2}\right\rangle _{T}^{\frac{3}{2}}+\frac{\beta }{2}\frac{%
\left\langle \frac{1}{|\Omega |}||u||^{2}\right\rangle _{T}^{\frac{1}{2}}}{U}%
\left\langle \varepsilon _{0}\right\rangle _{T} \\
+\frac{1}{\beta }\left\langle \frac{1}{|\Omega |}||u||^{2}\right\rangle
_{T}^{\frac{1}{2}}U\frac{\nu }{L^{2}}+\frac{\beta }{2}\frac{\left\langle 
\frac{1}{|\Omega |}||u||^{2}\right\rangle _{T}^{\frac{1}{2}}}{U}\left\langle
\varepsilon _{turb}\right\rangle _{T} \\
+\frac{1}{2\beta }\left\langle \frac{1}{|\Omega |}||u||^{2}\right\rangle
_{T}^{\frac{1}{2}}\frac{U}{L^{2}}\left\langle \frac{1}{|\Omega |}%
\int_{\Omega }\mathbf{\nu }_{turb}dx\right\rangle _{T}.
\end{gather*}

These four estimates then imply that%
\begin{gather*}
\left[ 1-\frac{\beta }{2}\frac{\left\langle \frac{1}{|\Omega |}%
||u||^{2}\right\rangle _{T}^{\frac{1}{2}}}{U}\right] \left\langle
\varepsilon _{0}+\varepsilon _{turb}\right\rangle _{T} \\
\leq \mathcal{O}\left( \frac{1}{T}\right) +\frac{1}{L}\left\langle \frac{1}{%
|\Omega |}||u||^{2}\right\rangle _{T}^{\frac{3}{2}}+\frac{1}{\beta }%
\left\langle \frac{1}{|\Omega |}||u||^{2}\right\rangle _{T}^{\frac{1}{2}}U%
\frac{\nu }{L^{2}}+ \\
+\frac{1}{2\beta }\left\langle \frac{1}{|\Omega |}||u||^{2}\right\rangle
_{T}^{\frac{1}{2}}\frac{U}{L^{2}}\left\langle \frac{1}{|\Omega |}%
\int_{\Omega }\mathbf{\nu }_{turb}dx\right\rangle _{T}.
\end{gather*}%
The limit superior as $T\rightarrow \infty $, which exists by\ (\ref%
{eq:Boundsonuand gradu}), yields the following%
\begin{gather*}
\left[ 1-\frac{\beta }{2}\right] \left\langle \varepsilon _{0}+\varepsilon
_{turb}\right\rangle _{\infty }\leq \frac{U^{3}}{L}+\frac{1}{\beta }U^{2}%
\frac{\nu }{L^{2}}+\frac{avg(\mathbf{\nu }_{turb})}{2\beta }\frac{U^{2}}{%
L^{2}} \\
\leq \frac{U^{3}}{L}\left( 1+\frac{1}{\beta }\frac{\nu }{LU}+\frac{1}{2\beta 
}\frac{avg(\mathbf{\nu }_{turb})}{LU}\right) .
\end{gather*}%
Thus, after rearranging,%
\begin{equation*}
\left\langle \varepsilon _{0}+\varepsilon _{turb}\right\rangle _{\infty
}\leq \frac{U^{3}}{L}\left( \frac{2}{2-\beta }+\frac{2}{\beta (2-\beta )}%
\mathcal{R}e^{-1}+\frac{1}{\beta (2-\beta )}\frac{avg(\mathbf{\nu }_{turb})}{%
LU}\right).
\end{equation*}

\subsection{Proof of Theorem 2: estimating $\frac{avg(\mathbf{\protect\nu }%
_{turb})}{LU}$}

We now prove the estimate in Theorem 2 for $avg(\mathbf{\nu }_{turb})$.
Since $\mathbf{\nu }_{turb}\mathbf{=}\sqrt{2}\mu l\sqrt{\frac{1}{2}%
|u^{\prime }|_{\text{model}}^{2}}$ we have 
\begin{eqnarray*}
\frac{1}{LU}\left\langle \frac{1}{|\Omega |}\int_{\Omega }\mathbf{\nu }%
_{turb}(x,t)dx\right\rangle _{T} &=&\frac{1}{LU}\left\langle \frac{1}{%
|\Omega |}\int_{\Omega }\sqrt{2}\mu l\sqrt{\frac{1}{2}|u^{\prime }|_{\text{%
model}}^{2}}dx\right\rangle _{T} \\
&=&\frac{\mu }{LU}\left\langle \frac{1}{|\Omega |}\int_{\Omega }l|u^{\prime
}|_{\text{model}}dx\right\rangle _{T}.
\end{eqnarray*}%
By the Cauchy-Schwarz inequality in space and (\ref{eq:CSinTime}) we have%
\begin{equation}
\frac{1}{LU}\left\langle \frac{1}{|\Omega |}\int_{\Omega }\mathbf{\nu }%
_{turb}dx\right\rangle _{T}\leq \frac{\mu }{LU}\left\langle \frac{1}{|\Omega
|}||l||^{2}\right\rangle _{T}^{1/2}\left\langle \frac{1}{|\Omega |}%
|||u^{\prime }|_{\text{model}}||^{2}\right\rangle _{T}{}^{1/2}.
\label{eq:CSestimateForNusubT}
\end{equation}%
Taking the limit superior of (\ref{eq:CSestimateForNusubT}) gives, as
claimed, 
\begin{eqnarray*}
\frac{avg(\mathbf{\nu }_{turb})}{LU} &\leq &\frac{\mu }{LU}avg(l)U_{\text{%
model}}^{\prime }=\mu \frac{avg(l)}{L}\frac{U_{\text{model}}^{\prime }}{U} \\
&=&\mu \frac{avg(l)}{L}\sqrt{I_{\text{model}}(u)}=\mu \frac{avg(l)}{L}\frac{%
U_{\text{model}}^{\prime }}{U^{\prime }}\sqrt{I(u)}.
\end{eqnarray*}

\section{Conclusions and open problems}

One basic challenge is that the analysis of models has advanced more slowly
than new models have been developed to respond to the needs of predictive
flow simulations. This means that models can evolve by more complex
parameterizations rather than more careful representation of the effects of
fluctuations on mean velocities. The gap between model complexity and model
understanding is widening even further due to the current model development
using machine learning and neural networks based eddy viscosity models. Since turbulence
models are used in many safety critical settings, there is an obvious need
to assess models during a simulation. To this end, this report presents an
alternative approach to assess model dissipation. The first result is that,
surprisingly, the need for eddy viscosity depends on the mesh resolving the
Taylor microscale rather than the Kolmogorov micro-scale. The second result
is when an eddy viscosity model is used, its total dissipation can be estimated in
terms of several computable flow statistics. When the model over dissipates,
these can be used to isolate the part of the model needing improvement; the
estimates separate the effects of the different model choices so that, when
over-dissipation occurs, the source in the various modeling decisions can be
isolated.

Open problems abound. Our analysis assumes that $f(x)=0$ on the
boundary. This means the effect of boundary layers is less than small scales
generated by the system nonlinearity. To seek the right computable
statistics for turbulent boundary layers, an analysis of energy dissipation
for shear flows is needed. There is a small number of eddy viscosity models
where quantities like the turbulent statistics identified herein can be
performed. Expanding this list to models closer to those used in practice is
an important collection of open problems. Numerical dissipation often is
much greater than model dissipation. Thus, analysis including numerical
dissipation is of great importance. Estimation of the effect of eddy
viscosity terms on helicity dissipation rates is little studied but possibly
critical for correct predictions of rotational flows. Neural network based
eddy viscosity models are at a beginning point in their development. Thus, practically
any question (analytical, theoretical of experimental) known for classic
models is open for these.

\end{document}